\newlist{enumth}{enumerate}{1}
\setlist[enumth]{label=\emph{(\arabic*)}, ref=(\arabic*)}
\DeclareMathSymbol{A}{\mathalpha}{operators}{`A}%
\DeclareMathSymbol{B}{\mathalpha}{operators}{`B}%
\DeclareMathSymbol{C}{\mathalpha}{operators}{`C}%
\DeclareMathSymbol{D}{\mathalpha}{operators}{`D}%
\DeclareMathSymbol{E}{\mathalpha}{operators}{`E}%
\DeclareMathSymbol{F}{\mathalpha}{operators}{`F}%
\DeclareMathSymbol{G}{\mathalpha}{operators}{`G}%
\DeclareMathSymbol{H}{\mathalpha}{operators}{`H}%
\DeclareMathSymbol{I}{\mathalpha}{operators}{`I}%
\DeclareMathSymbol{J}{\mathalpha}{operators}{`J}%
\DeclareMathSymbol{K}{\mathalpha}{operators}{`K}%
\DeclareMathSymbol{L}{\mathalpha}{operators}{`L}%
\DeclareMathSymbol{M}{\mathalpha}{operators}{`M}%
\DeclareMathSymbol{N}{\mathalpha}{operators}{`N}%
\DeclareMathSymbol{O}{\mathalpha}{operators}{`O}%
\DeclareMathSymbol{P}{\mathalpha}{operators}{`P}%
\DeclareMathSymbol{Q}{\mathalpha}{operators}{`Q}%
\DeclareMathSymbol{R}{\mathalpha}{operators}{`R}%
\DeclareMathSymbol{S}{\mathalpha}{operators}{`S}%
\DeclareMathSymbol{T}{\mathalpha}{operators}{`T}%
\DeclareMathSymbol{U}{\mathalpha}{operators}{`U}%
\DeclareMathSymbol{V}{\mathalpha}{operators}{`V}%
\DeclareMathSymbol{W}{\mathalpha}{operators}{`W}%
\DeclareMathSymbol{X}{\mathalpha}{operators}{`X}%
\DeclareMathSymbol{Y}{\mathalpha}{operators}{`Y}%
\DeclareMathSymbol{Z}{\mathalpha}{operators}{`Z}%
\renewcommand{\leq}{\leqslant}
\renewcommand{\geq}{\geqslant}
\numberwithin{equation}{section}
\newcommand{\uple}[1]{\text{\boldmath${#1}$}}
\def\setminus{\mathchoice
    {\mathbin{\vrule height .62ex width 1.61ex depth -.38ex}}
    {\mathbin{\vrule height .62ex width 1.61ex depth -.38ex}}
    {\mathbin{\vrule height .50ex width 0.85ex depth -.28ex}}
    {\mathbin{\vrule height .20ex width 0.570ex depth -.24ex}}
}
\renewcommand{\mathcal}{\mathscr}
\newcommand{\Cc}{\mathbf{C}}
\newcommand{\Zz}{\mathbf{Z}}
\newcommand{\Rr}{\mathbf{R}}
\newcommand{\proba}{\mathbf{P}}
\newcommand{\expect}{\mathbf{E}}
\def\loccit{loc.\kern3pt cit.{}\xspace}
\def\cf{see\kern.3em}
\def\Cf{See\kern.3em}
\def\eg{e.g.\kern.3em}
\def\resp{\text{resp.}\kern.3em}
\newcommand{\mods}[1]{\,(\mathrm{mod}\,{#1})}
\newcommand{\eps}{\varepsilon}
\renewcommand{\rho}{\varrho}
\DeclareMathSymbol{\gena}{\mathord}{letters}{"3C}
\DeclareMathSymbol{\genb}{\mathord}{letters}{"3E}
\theoremstyle{plain}
\newtheorem{theorem}{Theorem}[section]
\newtheorem*{theorem*}{Theorem}
\newtheorem{lemma}[theorem]{Lemma}
\newtheorem{proposition}[theorem]{Proposition}
\theoremstyle{remark}
\theoremstyle{definition}
\newtheorem{remark}[theorem]{Remark}
\renewcommand{\geq}{\geqslant}
\renewcommand{\leq}{\leqslant}
\begin{document}

\title{Rational approximation with chosen numerators}

\author{Emmanuel Kowalski}
\address[E. Kowalski]{D-MATH, ETH Z\"urich, R\"amistrasse 101, 8092 Z\"urich, Switzerland} 
\email{kowalski@math.ethz.ch}

\begin{abstract}
  We consider the problem of approaching real numbers with rational
  numbers with prime denominator and with a single numerator allowed for
  each denominator. We then present a simple application, related to
  possible correlations between trace functions and dynamical sequences.
\end{abstract}

\makeatletter
\@namedef{subjclassname@2020}{%
  \textup{2020} Mathematics Subject Classification}
\makeatother

\subjclass[2020]{11J04,11J83,11N05}


\maketitle 

\section{Introduction}

The following statement is motivated by certain specific applications
concerning possible correlations between ``trace functions'' and
``dynamical'' sequences (see Section~\ref{sec-appli} for a concrete
statement and the notes~\cite{ergodic} for a more general perspective).

\begin{theorem}\label{th-approx}
  Let~$c>0$ be a real number with $c\leq 1/2$. There exist sequences
  $(a_p)$ indexed by prime numbers, with $a_p$ an integer such that
  $0\leq a_p<p$ for all~$p$, such that for almost all $x\in [0,1]$, the
  set of primes~$p$ with
  $$
  \Bigl|x-\frac{a_p}{p}\Bigr|\leq \frac{c}{p}
  $$
  is infinite.
\end{theorem}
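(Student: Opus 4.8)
The plan is to choose the numerators $a_p$ at random and to show that almost every such choice works, by combining the second Borel--Cantelli lemma with Fubini's theorem; the only genuinely arithmetic ingredient is the equidistribution of $(px \bmod 1)$ over the primes. It is convenient to work on the circle $\Rr/\Zz$, identified with $[0,1]$ up to a null set, and to write $\|t\|$ for the distance from $t\in\Rr$ to the nearest integer. Let $\Omega=\prod_p\{0,1,\dots,p-1\}$ be equipped with the product $\mu$ of the uniform probability measures, so that the coordinates $\omega=(a_p)_p$ are independent and uniform. For each prime $p$ let $I_p(\omega)$ be the arc of length $2c/p$ centred at $a_p/p$, and let $E_p(x,\omega)$ be the event that $x\in I_p(\omega)$. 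The goal is to produce a single $\omega$ for which the set of $x$ lying in infinitely many $I_p(\omega)$ has full measure.

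First fix $x$. Since $E_p$ depends only on the coordinate $a_p$, the events $(E_p)_p$ are mutually independent under $\mu$. Among the $p$ equally spaced points $a/p$ ($0\leq a<p$), the one nearest to $x$ lies within $c/p$ of $x$ exactly when $\|px\|\leq c$, and because $c\leq 1/2$ no arc of length $2c\leq 1$ can capture two of them (the exceptional case $c=1/2$ with $\|px\|=1/2$ being a null set); hence
\[
\proba(E_p)=\frac1p\,\charfun_{\|px\|\leq c}.
\]
To apply the second Borel--Cantelli lemma I must check that $\sum_p\proba(E_p)$ diverges for almost every $x$. When $c=1/2$ this is immediate, since every prime contributes $1/p$ and $\sum_p 1/p=\infty$. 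For $c<1/2$ I invoke Vinogradov's theorem: for every irrational $x$ the sequence $(px\bmod 1)_p$ is equidistributed, so the primes with $\|px\|\leq c$ have relative density $2c$ among the primes, and partial summation gives $\sum_{p\leq N,\ \|px\|\leq c}1/p\sim 2c\log\log N\to\infty$. Thus for almost every $x$ one has $\sum_p\proba(E_p)=\infty$, and by independence the second Borel--Cantelli lemma yields
\[
\mu\bigl(\{\omega:\ x\in I_p(\omega)\text{ for infinitely many }p\}\bigr)=1.
\]

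Finally I pass from ``for a.e. $x$, $\mu$-a.e. $\omega$'' to ``$\mu$-a.e. $\omega$, for a.e. $x$'' by Fubini--Tonelli. Let $g(\omega)$ denote the Lebesgue measure of the set of $x\in[0,1]$ belonging to infinitely many $I_p(\omega)$. The last displayed identity says that the inner integral over $\omega$ equals $1$ for almost every $x$; integrating in $x$ and exchanging the order of integration of the nonnegative indicator gives $\int_\Omega g\,d\mu=1$, and since $0\leq g\leq 1$ this forces $g(\omega)=1$ for $\mu$-almost every $\omega$. Any such $\omega$ furnishes the required sequence $(a_p)$. I expect the only real obstacle to be the divergence of $\sum_p\proba(E_p)$ for almost all $x$ in the range $c<1/2$, which rests on the equidistribution of $(px\bmod 1)$ over primes; the remaining probabilistic scaffolding is routine.
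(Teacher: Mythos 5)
Your argument is correct and coincides essentially with the paper's second proof (Section~\ref{sec-proof2}): random independent numerators, the computation of $\proba(x\in I_p)$, Vinogradov's equidistribution of $px \bmod 1$ to get divergence of $\sum_p \proba(E_p)$ for irrational $x$, the second Borel--Cantelli lemma, and Fubini to swap the quantifiers. Your treatment of the boundary cases (working on $\Rr/\Zz$, noting the $c=1/2$ degeneracy) is if anything slightly more careful than the paper's.
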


So, informally, we consider a problem of diophantine approximation
where, for each denominator, only one numerator is allowed (and with the
additional restriction, coming from the original motivation, that the
denominators are primes). The approximation is of course then worse than
what is allowed by varying the numerator (and of course not every choice
of numerator can be successful).

We will give two proofs in Sections~\ref{sec-proof}
and~\ref{sec-proof2}. The first is (probably unsurprisingly) very
elementary, but has some nice aspects, especially an analogy with
sieve. The second proof is more straightforward in principle, but
involves more sophisticated ingredients, especially about the
distribution of primes.

This simple result suggests some questions:

\begin{enumerate}
\item Can one describe an explicit sequence $(a_p)$ which has the
  desired property? Note that the second proof will show that it is a
  generic property (in the sense of the natural probability measure on
  the space of sequences $(a_p)$, described below).
\item More specifically, if we define $(a_p)$ using the ``greedy''
  algorithm (taking $a_p$ for successive primes so as to always maximize
  the measure of the union of the intervals up to that point), does it
  work?
\item It is easy to see that the ``exceptional set'' will always contain
  infinitely many rational numbers.  Are there other elements in this
  exceptional set? If Yes, can we describe the elements that belong to
  it, or compute its Hausdorff dimension?
\item For suitable $(a_p)$ and $x$, can we estimate asymptotically as
  $X\to +\infty$ the number of primes $p\leq X$ such that
  $|x-a_p/p|\leq c/p$? Heuristically, one can hope to have something
  like
  $$
  2c\sum_{p\leq X}\frac{1}{p}\sim 2c\log\log X
  $$
  such primes $\leq X$; can this be established for suitable choices of
  the $a_p$? The first proof provides some weaker quantitative
  information, with high probability (with respect to~$x$).
\item What about multidimensional versions? Variants on manifolds?
\end{enumerate}

\begin{remark}
  Although problems of this kind do not seem to be standard in
  diophantine approximation, one can interpret the question roughly as
  asking whether there exists a sequence $(a_p)$ such that the set of
  points $a_p/p$ is ``eutaxic'' with respect to the radii $c/p$ (see,
  e.g., the survey of Durand~\cite[Ch.\,8]{durand}, and also Bugeaud's
  book~\cite[Ch.\,6]{bugeaud}).

  A related, but more delicate, question was solved by
  Shepp~\cite{shepp} (after previous work of Dvoretzky and others), who
  found a sharp criterion which ensures that a non-increasing sequence
  $(\ell_n)_{n\geq 1}$ of positive real numbers has the property that,
  almost surely, the union of arcs of length~$\ell_n$ with independent
  and uniform centers will cover \emph{entirely} a circle of
  length~$1$. Shepp proved that this holds if and only if
  $$
  \sum_{n\geq 1}\frac{1}{n^2}\exp(\ell_1+\cdots+\ell_n)=+\infty.
  $$

  As observed by Dvoretzky~\cite{dvoretzky}, this is a different
  condition than asking that the union covers almost all points of the
  circle, which occurs if and only if the series $\sum \ell_n$ diverges,
  by an elementary argument as in Section~\ref{sec-proof2}.

\end{remark}

\subsection*{Notation} We use the Vinogradov notation $f\ll g$ (for
complex-valued functions $f$ and $g$ defined on some set~$X$): it means
that there exists a real number $c\geq 0$ (the ``implied constant'')
such that $|f(x)|\leq cg(x)$ for all $x\in X$.

\subsection*{Acknowledgements} Thanks to Y. Bugeaud for interesting
comments and references, in particular to the work of Shepp.

\section{First proof}\label{sec-proof}

We denote by $\lambda$ the Lebesgue measure. For a sequence
$\uple{a}=(a_p)$ with $0\leq a_p<p$ for all primes~$p$, define
$$
A_{\uple{a}}=\{x\in [0,1]\,\mid\, \Bigl|x-\frac{a_p}{p}\Bigr|\leq
\frac{c}{p}\text{ for infinitely many~$p$}\}.
$$

We thus want to find $\uple{a}$ with $\lambda(A_{\uple{a}})=1$.  For
real parameters~$X$ and~$Y$ with $1\leq X<Y$, we consider the set
$$
\Omega_{X,Y}(\uple{a})=\Bigl\{ x\in [0,1]\,\mid\,
\Bigl|x-\frac{a_p}{p}\Bigr|> \frac{c}{p}\text{ for $X< p\leq Y$} \Bigr\}.
$$

We observe that the set $\mathcal{A}$ of all sequences $\uple{a}$ is
naturally a compact set, as a product of finite sets. In particular,
there is a natural product probability measure on this set, where each
$a_p$ is uniform over the integers from $0$ to $p-1$. We will use the
notation $\proba(\cdot)$ and $\expect(\cdot)$ below to indicate
probability and expectation according to this measure.

The crucial lemma is the following. We view it as a kind of sieve
statement, on average over $\mathcal{A}$.

\begin{lemma}\label{lm-sieve}
  Let
  $$
  H_{X,Y}=\sum_{X< p\leq Y}\frac{1}{p}.
  $$
  
  We have
  $$
  \expect(\lambda(\Omega_{X,Y}))
  \ll \frac{1}{H_{X,Y}},
  $$
  where the implied constant depends only on~$c$.
\end{lemma}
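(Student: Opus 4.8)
The plan is to pass to an averaged pointwise statement via Tonelli, exploit the independence of the coordinates $a_p$ to factor the relevant probability, and then run a second-moment (Chebyshev) argument on the resulting weighted counting function; the arithmetic heart is a quasi-independence estimate for the congruence conditions modulo distinct primes, which is where the ``sieve'' flavour enters. First I would write
$$
\expect(\lambda(\Omega_{X,Y})) = \int_0^1 \proba(x\in\Omega_{X,Y})\,dx,
$$
and use that the $a_p$ are independent and that $x\in\Omega_{X,Y}$ means $|x-a_p/p|>c/p$ for every $p\in(X,Y]$, so that the probability factors as $\prod_{X<p\le Y}(1-N_p(x)/p)$, where $N_p(x)=\#\{0\le a<p : |x-a/p|\le c/p\}$. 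Since $c\le 1/2$, the intervals of radius $c/p$ about the points $a/p$ are pairwise disjoint, so $N_p(x)\in\{0,1\}$ away from a finite set; regarding everything on $\Rr/\Zz$ one has exactly $N_p(x)=\charfun[\|px\|\le c]$, with $\|\cdot\|$ the distance to $\Zz$ (on $[0,1]$ this holds up to boundary terms of size $O(1/p)$). Using $1-t\le e^{-t}$ then reduces the task to bounding $\int_0^1 e^{-f(x)}\,dx$, where $f(x)=\sum_{X<p\le Y}N_p(x)/p$, and this will follow once $f$ is shown to concentrate near its mean, which is of size $H_{X,Y}$.

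The analytic core is the computation of the first two moments of $f$. A direct calculation gives $\mu := \int_0^1 f = 2c\,H_{X,Y} + O(1)$. For the variance I would expand $\int_0^1 f^2 = \sum_{p,q}(pq)^{-1}\int_0^1 N_p N_q\,dx$; the diagonal $p=q$ contributes $O(\sum_p p^{-2})=O(1)$, and for $p\ne q$ the key estimate is
$$
\int_0^1 N_p(x)N_q(x)\,dx = 4c^2 + O\!\left(\frac{1}{pq}\right).
$$
This I would prove by Fourier expansion of $\charfun[\|t\|\le c]$: the surviving frequencies satisfy $np+mq=0$, which forces $p\mid m$ and $q\mid n$ because $(p,q)=1$, and the nonzero modes are controlled by $|\hat\psi(n)|\le 1/(\pi|n|)$. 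Summing over $p\ne q$, the main term $4c^2\sum_{p\ne q}(pq)^{-1}$ cancels exactly against $\mu^2$, leaving $\varia(f)=O(1)$; even the cruder bound $\varia(f)\ll H_{X,Y}$ would be enough below.

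To conclude I would split $\int_0^1 e^{-f}\,dx$ over $\{f\ge \mu/2\}$ and $\{f<\mu/2\}$. On the first set $e^{-f}\le e^{-\mu/2}=e^{-cH_{X,Y}+O(1)}$, which is negligible. On the second, $e^{-f}\le 1$, and Chebyshev's inequality gives
$$
\lambda(\{f<\mu/2\}) \le \lambda(\{|f-\mu|>\mu/2\}) \le \frac{4\,\varia(f)}{\mu^2}\ll\frac{1}{H_{X,Y}^2}\ll\frac{1}{H_{X,Y}}.
$$
Adding the two contributions yields $\expect(\lambda(\Omega_{X,Y}))\ll 1/H_{X,Y}$, with implied constant depending only on $c$.

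The main obstacle is the off-diagonal estimate of the previous step: it is precisely the statement that the sieve conditions $\|px\|\le c$ and $\|qx\|\le c$ become asymptotically independent for distinct primes, and essentially all the arithmetic content lives there, whereas the probabilistic factorisation and the concentration step are routine. I would also need to take mild care with the boundary effects arising from working on $[0,1]$ rather than $\Rr/\Zz$, but these contribute only lower-order terms and do not affect the final bound.
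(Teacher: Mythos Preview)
Your argument is correct, and the arithmetic heart---the quasi-independence estimate $\int_0^1 N_p N_q\,dx = 4c^2 + O(1/(pq))$ for distinct primes---is in fact the same computation the paper makes, since $\expect(\lambda(I_{p_1}\cap I_{p_2})) = (p_1p_2)^{-1}\int_0^1 N_{p_1}(x)N_{p_2}(x)\,dx$. The structural difference is in the order of the two averages. The paper applies Chebyshev on $[0,1]$ first, to the \emph{random} counting function $N_{X,Y}(x)=\sum_p\charfun_{I_p(a_p)}(x)$, obtaining $\lambda(\Omega_{X,Y})\le \alpha_{X,Y}/\nu_{X,Y}^2$ pointwise in~$\uple{a}$, and only then takes $\expect_{\uple{a}}$ of the variance $\alpha_{X,Y}$. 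You instead apply Fubini first, use independence of the coordinates $a_p$ to factor the probability as a product, bound $1-t\le e^{-t}$, and then run Chebyshev on the \emph{deterministic} function $f(x)=\expect_{\uple{a}}(N_{X,Y}(x))$. Modulo boundary effects these two variances coincide, so the computations merge; your route additionally extracts an exponentially small contribution on $\{f\ge\mu/2\}$, which is pleasant but not needed. The paper proves the off-diagonal bound by a direct geometric count (``drawing a picture''), whereas your Fourier argument is cleaner and yields the slightly sharper error $O(1/(pq))$ rather than $O(1/\max(p,q)^2)$. One small caution: on $[0,1]$ (as opposed to $\Rr/\Zz$) the boundary terms give $\mu = 2cH_{X,Y}+O(1)$, so the ``exact'' cancellation leaves $\varia(f)=O(H_{X,Y})$ rather than $O(1)$; as you note, this cruder bound is all that is required.
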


\begin{proof}
  Let $\varphi_p\colon [0,1]\to \{0,1\}$ denote the characteristic
  function of the interval $I_p(a_p)=[a_p/p-c/p,a_p/p+c/p]$, each being
  viewed as random variables on~$\mathcal{A}$ (the $\varphi_p$ are
  random functions, the $I_p$ are random intervals). Let
  $$
  N_{X,Y}=\sum_{X< p\leq Y}\varphi_p,
  $$
  again a random variable on~$\mathcal{A}$. We denote also
  $$
  \nu_{X,Y}=\int_0^1N_{X,Y}(x)dx
  $$
  and note that $\nu_{X,Y}=2cH_{X,Y}$, independently of the value
  of~$\uple{a}$.

  Noting that $\Omega_{X,Y}$
  is the set of those~$x$ where $N_{X,Y}=0$, we deduce from Markov's
  inequality (on $[0,1]$ with the Lebesgue measure) the upper bound
  $$
  \lambda(\Omega_{X,Y})
  \leq
  \lambda\Bigl(\Bigl\{
  x\in[0,1]\,\mid\, |N_{X,Y}(x)-\nu_{X,Y}|\geq \nu_{X,Y}
  \Bigr\}\Bigr)\leq \frac{\alpha_{X,Y}}{\nu_{X,Y}^2}
  $$
  where
  $$
  \alpha_{X,Y}=\int_0^1\Bigl(N_{X,Y}(x)-\nu_{X,Y}\Bigr)^2dx
  $$
  (again a random variable on~$\mathcal{A}$).

  We have
  $$
  \alpha_{X,Y}=\int_0^1\Bigl(\sum_{X\leq
    p\leq Y}\Bigl(\varphi_p(x)-\frac{2c}{p}\Bigr)\Bigr)^2dx=
  \sum_{X< p_1,p_2\leq Y} \int_0^1
  \Bigl(\varphi_{p_1}(x)-\frac{2c}{p}\Bigr)
  \Bigl(\varphi_{p_2}(x)-\frac{2c}{p}\Bigr)
  dx.
  $$

  For $p_1=p_2$, the integral is equal to
  $$
  \int_0^1
  \Bigl(\varphi_{p_1}(x)-\frac{2c}{p_1}\Bigr)^2dx=
  \frac{2c}{p_1}\Bigl(1-\frac{2c}{p_1}\Bigr)\leq \frac{2c}{p_1}
  $$
  (variance of a Bernoulli random variable with probability of success
  $2c/p_1$), again independently of~$\uple{a}$. Thus
  $$
  \sum_{X< p_1\leq Y}\expect\Bigl(\int_0^1
  \Bigl(\varphi_{p_1}(x)-\frac{2c}{p_1}\Bigr)^2dx\Bigr)
  \leq 2cH_{X,Y}.
  $$

  We now suppose that $p_1\not=p_2$.  We then have
  \begin{equation}\label{eq-var1}
    \int_0^1 \Bigl(\varphi_{p_1}(x)-\frac{2c}{p_1}\Bigr)
    \Bigl(\varphi_{p_2}(x)-\frac{2c}{p_2}\Bigr)dx =\lambda(I_{p_1}\cap
    I_{p_2})-\frac{4c^2}{p_1p_2},
  \end{equation}
  where the first term depends on~$\uple{a}$.

  We next estimate the expectation
  $$
  \expect\Bigl( \lambda(I_{p_1}\cap I_{p_2}) \Bigr)
  $$
  over~$\uple{a}$. For this purpose, we may (and do) assume that
  $p_1<p_2$. We then have the formula
  $$
  \expect\Bigl( \lambda(I_{p_1}\cap I_{p_2}) \Bigr)= \frac{1}{p_1p_2}
  \sum_{0\leq a<p_1}\lambda\Bigl( I_{p_1}(a)\cap \bigcup_{0\leq
    b<p_2}\Bigl[ \frac{b}{p_2}-\frac{c}{p_2},
  \frac{b}{p_2}+\frac{c}{p_2} \Bigr] \Bigr).
  $$

  Drawing a picture if need be, we get
  $$
  \expect\Bigl( \lambda(I_{p_1}\cap I_{p_2}) \Bigr)=
  \frac{1}{p_1p_2}\times p_1\times \Bigl( \frac{4c^2}{p_1}+O\Bigl(
  \frac{1}{p_2} \Bigr) \Bigr)=\frac{4c^2}{p_1p_2}
  +O\Bigl(
  \frac{1}{p_2^2}
  \Bigr).
  $$

  Combined with~(\ref{eq-var1}), this leads to
  $$
  \sum_{X\leq p_1<p_2\leq Y} \expect
  \Bigl(\int_0^1 \Bigl(\varphi_{p_1}(x)-\frac{2c}{p_1}\Bigr)
  \Bigl(\varphi_{p_2}(x)-\frac{2c}{p_2}\Bigr)dx \Bigr)\ll H_{X,Y}.
  $$

  Multiplying by two to account for the case $p_1>p_2$ and adding the
  contribution where $p_1=p_2$, we conclude that
  $\expect( \alpha_{X,Y})\ll H_{X,Y}$, and hence
  $$
  \expect(\lambda(\Omega_{X,Y}))\ll H_{X,Y}^{-1},
  $$
  as claimed.
\end{proof}

\begin{remark}
  The start of the argument is essentially of form of sieve inequality,
  especially similar to those used in certain geometric group theory
  works by Lubotzky and Meiri, see for instance the account
  in~\cite[Th.\,5.3.1]{expanders}.

  More generally, sieve methods in analytic number theory lead to bounds
  (also sometimes lower bounds) for the sizes of sets of the form
  $$
  \{n\leq N\,\mid\, n\mods{p}\notin I_p\text{ for } p\leq X\}
  $$
  for suitable choices of subsets $I_p\subset \Zz/p\Zz$ and of
  parameters~$N$ and~$X$. It was pointed out in~\cite{sieve} that in
  fact some of the basic techniques (e.g., the so-called ``large
  sieve'') can be extended to much more general settings than the
  integers, and the lemma above provides another illustration.
\end{remark}

We now conclude the proof of the theorem.  Since
$$
\lim_{Y\to +\infty}H_{X,Y}=\sum_{p>X} \frac{1}{p}=+\infty
$$
for any $X\geq 1$ (one of the most elementary quantitative forms of the
infinitude of primes, already known to Euler), Lemma~\ref{lm-sieve}
implies that for any $X\geq 2$ and any $\eps>0$, we can find
$(a_p)_{X< p\leq Y}$ (with $0\leq a_p<p$) such that
$$
  \lambda\Bigl(\Bigl\{ x\in [0,1]\,\mid\,
  \Bigl|x-\frac{a_p}{p}\Bigr|\leq \frac{c}{p}\text{ for some prime $p$
    with $X< p\leq Y$}
  \Bigr\}\Bigr)\geq 1-\eps.
$$

Let $X_1=1$. Apply the previous remark first with (say) $X=1$ and
$\eps=1/2$, and denote $X_2$ a suitable value of~$Y$. Then apply the
assumption with $X=X_2$ and $\eps=1/4$, calling~$X_3$ the value of~$Y$;
repeating, we obtain a strictly increasing sequence $(X_n)_{n\geq 1}$ of
integers and a sequence $(a_p)\in\mathcal{A}$ such that the set
$$
B_n=\Bigl\{ x\in [0,1]\,\mid\,
\Bigl|x-\frac{a_p}{p}\Bigr|\leq \frac{c}{p}\text{ for some prime $p$
  with $X_n<p\leq X_{n+1}$} \Bigr\}
$$
satisfies $\lambda(B_n)\geq 1-2^{-n}$ for any~$n\geq 1$.

If $x\in [0,1]$ belongs to infinitely sets $B_n$, then $x\in
A_{\uple{a}}$. On the other hand, since
$$
\sum_{n\geq 1}\lambda([0,1]\setminus B_n)<+\infty,
$$
the easy Borel--Cantelli Lemma shows that almost every $x\in [0,1]$
belongs at most to finitely many sets~$[0,1]\setminus B_n$.
\section{Second proof}\label{sec-proof2}

We now give the second proof. This is based on an application of
Fubini's Theorem (which is a standard approach, as in the first few
lines of Dvoretzky's paper~\cite{dvoretzky}).

We write again $I_p(\uple{a})=[a_p/p-c/p,a_p/p+c/p]$, viewed as random
intervals on the probability space~$\mathcal{A}$ to which
$\proba(\cdot)$ and~$\expect(\cdot)$ refer. Let $x\in [0,1]$. We then
have
$$
\proba(x\in I_p)=\frac{1}{p}\sum_{\substack{0\leq a<p\\|x-a/p|<c/p}}1
$$
and hence $\proba(x\in I_p)$ is either~$0$ or~$1/p$, depending on
whether there exists an integer~$a$ such that the fractional part
of~$xp$ is~$<c$, or not. 

It is a non-trivial fact from the distribution of primes that, if~$x$ is
irrational, then we have
\begin{equation}\label{eq-vino}
  \sum_{\{xp\}<c}\frac{1}{p}=+\infty
\end{equation}
(precisely, this follows by summation by parts from the much more
precise results of Vinogradov~\cite[Ch.\,XI]{vinogradov} which give an
asymptotic formula for the number of primes $p\leq X$
satisfying~$\{xp\}<c$; we note in passing that this result has been
improved since then, notably by Vaughan). Thus, since the events
$\{x\in I_p\}$ are independent by construction, the non-trivial
direction of the Borel--Cantelli Lemma implies
$$
\proba(x\in I_p\text{ for infinitely many } p)=1
$$
for any irrational~$x$.

Now by Fubini's Theorem, we obtain
\begin{align*}
\expect(\lambda(A_{\uple{a}}))&=
\expect\Bigl(
\int_{0}^1 1_{\{x\in I_p\text{ for infinitely many } p\}}\ dx
\Bigr)\\
&=\int_0^1 \proba(x\in I_p\text{ for infinitely many } p)dx
=1,
\end{align*}
and since $\lambda(A_{\uple{a}})\leq 1$, this means that $A_{\uple{a}}$
has measure~$1$ for almost all sequences~$(a_p)$. 

\begin{remark}
  We do not require the full force of Vinogradov's theorem, but in any
  case, the formula~(\ref{eq-vino}) for an arbitrary irrational
  number~$x$ seems to be comparable to the similar divergence of the sum
  of inverses of primes in an arithmetic progression.

  It would also be enough to know that the divergence of the
  series~(\ref{eq-vino}) holds for almost all~$x$ (instead of all
  irrationals), and it is quite likely that this can be proved more
  easily.
\end{remark}

\section{Application}\label{sec-appli}

Let $X=(\Rr/\Zz)^2$ and $\mu$ the Lebesgue measure on~$X$. Further, let
$f\colon X\to X$ be the map defined by $f(x,y)=(x+y,y)$. We have
$f_*\mu=\mu$. Define $\varphi\colon X\to \Cc$ by
$\varphi(x,y)=e(x)$.

We chose a sequence $(a_p)$ as in Theorem~\ref{th-approx} with
$c=1/2$. For $p$ prime and $n\in\Zz$, we define
$t_p(n)=e(-na_p/p)$. (This is a trace function modulo~$p$, but this
aspect is not important here.)

\begin{proposition}
  For $p$ prime, define $s_p\colon X\to \Cc$ by
  $$
  s_p(x,y)=\frac{1}{p}\sum_{0\leq n<p}t_p(n)\varphi(f^n(x,y)).
  $$
  
  The following properties hold:
  \par
  \emph{(1)} The sequence~$(s_p)$ does not converge almost everywhere as
  $p\to +\infty$.
  \par
  \emph{(2)} If $\mathsf{P}$ is an infinite set of primes such that
  $$
  \sum_{p\in\mathsf{P}}\frac{\log p}{p}<+\infty,
  $$
  then the sequence $(s_p)_{p\in\mathsf{P}}$ converges almost everywhere
  to~$0$.
\end{proposition}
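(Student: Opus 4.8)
The plan is to compute $s_p$ in closed form. Since $f^n(x,y)=(x+ny,y)$ we have $\varphi(f^n(x,y))=e(x+ny)$, so that
\[
s_p(x,y)=\frac{e(x)}{p}\sum_{0\le n<p}e\bigl(n(y-a_p/p)\bigr)
\]
is, up to the unit factor $e(x)$, a normalized Dirichlet kernel in the single variable~$y$. Writing $\psi=y-a_p/p$ and $\|\cdot\|$ for the distance to the nearest integer, this gives
\[
|s_p(x,y)|=\frac1p\,\Bigl|\frac{\sin(\pi p\psi)}{\sin(\pi\psi)}\Bigr|,
\]
a function of $y$ only. Two elementary estimates for this kernel drive the whole argument: a lower bound $|s_p(x,y)|\ge 2/\pi$ valid whenever $\|y-a_p/p\|\le 1/(2p)$ (from $\sin t\ge(2/\pi)t$ on $[0,\pi/2]$), and an upper bound $|s_p(x,y)|\le\bigl(2p\,\|y-a_p/p\|\bigr)^{-1}$ valid in general (from $|\sin\pi t|\ge 2\|t\|$).

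For part \emph{(1)} I would combine a global $L^2$ bound with these local spikes. Integrating the squared Dirichlet kernel over one period gives $\int_X|s_p|^2\,d\mu=1/p$, so $s_p\to 0$ in $L^2(X)$. On the other hand, applying Theorem~\ref{th-approx} (with $c=1/2$) to the coordinate~$y$: for almost every~$y$ there are infinitely many~$p$ with $|y-a_p/p|\le 1/(2p)$, hence with $\|y-a_p/p\|\le 1/(2p)$, and for each such~$p$ the lower bound gives $|s_p(x,y)|\ge 2/\pi$. Thus $\limsup_p|s_p(x,y)|\ge 2/\pi$ for almost every $(x,y)$. These facts are incompatible with almost everywhere convergence: if $s_p\to s$ a.e., then, since $|s_p|\le 1$ and $\mu(X)=1$, dominated convergence would give $s_p\to s$ in $L^2$, forcing $s=0$ a.e., hence $|s_p|\to 0$ a.e., contradicting the persistent lower bound.

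For part \emph{(2)} I would use the upper bound together with Borel--Cantelli. Fix $\eps>0$. The bound shows $\{|s_p|>\eps\}\subseteq\{y:\|y-a_p/p\|<1/(2p\eps)\}$, a set of $\mu$-measure at most $1/(p\eps)$. Since $\log p\ge 1$ for $p\ge 3$, the hypothesis yields $\sum_{p\in\mathsf{P}}1/p<\infty$ (the term $p=2$, if present, being harmless), so $\sum_{p\in\mathsf{P}}\mu(\{|s_p|>\eps\})<\infty$. By Borel--Cantelli, for almost every $(x,y)$ one has $|s_p(x,y)|\le\eps$ for all but finitely many $p\in\mathsf{P}$; letting $\eps$ run through the values $1/k$ and intersecting the resulting full-measure sets gives $s_p\to 0$ a.e. along $\mathsf{P}$. (Even more directly, $\sum_{p\in\mathsf{P}}\int_X|s_p|^2\,d\mu=\sum_{p\in\mathsf{P}}1/p<\infty$ forces $\sum_{p\in\mathsf{P}}|s_p|^2<\infty$ a.e. by monotone convergence, whence $s_p\to 0$ a.e.)

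The computations here are routine; the only genuinely substantive point is the incompatibility argument in part \emph{(1)}, where the almost-everywhere lower bound supplied by Theorem~\ref{th-approx} must be played off against $L^2$-convergence to~$0$. I expect that step --- rather than either Dirichlet-kernel estimate --- to be the conceptual crux, with the remainder being bookkeeping with Borel--Cantelli and interchange of sum and integral.
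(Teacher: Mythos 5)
Your proposal is correct and follows the paper's proof in its essentials: both compute $s_p$ as a normalized Dirichlet kernel in $y-a_p/p$, both prove (2) by combining the kernel's decay away from $a_p/p$ with the easy Borel--Cantelli lemma, and both prove (1) by playing the kernel's spike at $a_p/p$ (guaranteed infinitely often for almost every $y$ by Theorem~\ref{th-approx} with $c=1/2$) against the fact that any almost-everywhere limit would have to be zero. The only differences are minor but real: you identify the putative limit as zero via the Parseval computation $\int_X|s_p|^2\,d\mu=1/p$ and dominated convergence, whereas the paper simply invokes part (2) along a sparse subsequence; and your argument for (2) only uses $\sum_{p\in\mathsf{P}}1/p<+\infty$, which is weaker than the stated hypothesis and in fact sharpens the remark following the proposition (where the paper replaces $\log p$ by any $\psi_p\to+\infty$, while your version shows $\psi_p\equiv 1$ already suffices).
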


\begin{proof}
  Since $f^n(x,y)=(x+ny,y)$ for all integers $n\in\Zz$, we can compute
  $s_n$ by summing a finite geometric progression, and we obtain
  $$
  s_p(x,y)=\frac{e(x)}{p}\frac{\sin(\pi p(y-a_p/p))}{\sin(\pi
    (y-a_p/p))}
  e\Bigl(\frac{(p-1)}{2}(y-a_p/p)\Bigr).
  $$

  It follows that $s_p(x,y)\to 0$ along any infinite set $\mathsf{P}$ of
  primes such that
  $$
  \lim_{\substack{p\to +\infty\\p\in \mathsf{P}}}
  \ p\Bigl|y-\frac{a_p}{p}\Bigr|=+\infty.
  $$

  Thus~(2) follows because the assumption there implies that almost all
  $(x,y)$ satisfy
  $$
  \Bigl|y-\frac{a_p}{p}\Bigr|\geq \frac{\log p}{p}
  $$
  for all but finitely many $p\in\mathsf{P}$, by the easy
  Borel--Cantelli lemma.

  We now prove~(1). Note that if $(s_p)$ converges almost everywhere,
  the limit must be zero according to~(2). But the formula for $s_p$ and
  the defining property of $(a_p)$ imply that for all~$x$ and almost all
  $y\in\Rr/\Zz$, we have $|s_p(x,y)|\gg 1$ for infinitely many
  primes~$p$. Thus there is almost surely a subsequence which does not
  converge to~$0$.
\end{proof}

\begin{remark}
  The condition in~(2) can be replaced by
  $$
  \sum_{p\in\mathsf{P}}\frac{\psi_p}{p}<+\infty,
  $$
  where $(\psi_p)_p$ is an arbitrary sequence of non-negative real
  numbers such that $\psi_p\to +\infty$.
\end{remark}


\begin{thebibliography}{CCC}

\bibitem{bugeaud} Y. Bugeaud: \textit{Approximation by algebraic
    numbers}, Cambridge Tracts in Math. 160, Cambridge Univ. Press,
  2004.

\bibitem{durand} A. Durand: \textit{Describability via ubiquity and
    eutaxy in Diophantine approximation}, Ann. Math. Blaise Pascal 22
  (2015), 1--149.  

\bibitem{dvoretzky} A. Dvoretzky: \textit{On covering a circle by
    randomly placed arcs}, Proc. Nat. Acad. Sci. 42 (1956), 199--203.
  
\bibitem{ik} H. Iwaniec and E. Kowalski: \textit{Analytic Number
    Theory}, A.M.S Colloquium Publ. 53, 2004.

\bibitem{sieve} E. Kowalski: \textit{The large sieve and its
    applications}, Cambridge Tracts in Math. 175, Cambridge Univ. Press,
  2008.
  
\bibitem{expanders} E. Kowalski: \textit{An introduction to expander
    graphs}, Cours Spécialisés 26, S.M.F, 2019.
  
\bibitem{ergodic} E. Kowalski: \textit{Unmotivated ergodic averages},
  preprint (2019--2023); available at
  \url{www.math.ethz.ch/~kowalski/ergodic-trace.pdf}.

\bibitem{shepp} L.A. Shepp: \textit{Covering the circle with random
    arcs}, Israel J. Math. 11 (1972), 328--345.

\bibitem{vinogradov} I.M. Vinogradov: \textit{The method of
    trigonometrical sums in the theory of numbers}, Interscience
  Publishers, 1963.
  
\end{thebibliography}
\end{document}